\newcommand{\C}{\mathbb C}
\newcommand{\R}{\mathbb R}
\newcommand{\Z}{\mathbb Z}
\newcommand{\Q}{\mathbb Q}
\newcommand{\N}{\mathbb N}
\newcommand{\F}{\mathbb F}
\newcommand{\slt}{SL(2,\mathbb C)}
\newcommand{\sma}{\left(\begin{array}}
\newcommand{\fma}{\end{array}\right)}
\newtheorem{lem}{Lemma}[section]
\newtheorem{co}[lem]{Corollary}
\newtheorem{thm}[lem]{Theorem}
\newtheorem{prop}[lem]{Proposition}
\newenvironment{proof}{\textbf{Proof.}}{\newline\hspace*{\fill}{$\Box$}\\}
\begin{document}
\title{A 3-manifold group which is not four dimensional linear}
\author{J.\,O.\,Button\\
Selwyn College\\
University of Cambridge\\
Cambridge CB3 9DQ\\
U.K.\\
\texttt{jb128@dpmms.cam.ac.uk}}
\date{}
\maketitle
\begin{abstract}
We give examples of closed orientable graph 3-manifolds with fundamental
group which is not a subgroup of $GL(4,k)$ for any field $k$. This
answers a question in the Kirby problem list from 1977
which is credited to the late William Thurston. 
\end{abstract}
\section{Introduction}

As part of Thurston's revolutionary understanding of 3 dimensional
geometry and topology, he established that
the fundamental groups of compact
3-manifolds could exhibit much better behaviour than 
typically found amongst all finitely presented groups. In particular two
properties of increasing strength that a finitely presented (or generated)
group can hold are being residually finite and being linear (which here
will always mean a subgroup of $GL(n,\F)$ for $\F$ any field). To give
an idea of his influence, it is hard to imagine now that before this work
the residual finiteness of $\pi_1(M)$ for $M$ the exterior of a prime
knot would only have been established for torus knots and the very few knots 
known to be hyperbolic, such as the those in the computational
work of R.\,Riley. The fundamental group of
a hyperbolic 3-manifold must be a subgroup of $P\slt$ that lifts to
$\slt$ and so is linear in 2 dimensions and residually finite, so
Thurston's results showing that many 3-manifolds have hyperbolic structures
raised the possibility that the fundamental group of every compact 3-manifold
could be residually finite or even linear. Indeed in the first version of the
Kirby problem list which dates from 1977, we have Question 3.33 by Thurston
with Part (A) asking whether all such groups can be embedded in $GL(4,\R)$
and Part (B) asking whether these groups are all residually finite. This
latter question was established to be true on the acceptance of Perelman's
solution to Geometrisation, because Thurston indicated and Hempel proved in
\cite{hemrf} that residual finiteness is preserved when constructing
3-manifolds from their geometric pieces which themselves will have linear
fundamental group.

Now this does not show the stronger property of linearity which is still
open for 3-manifold groups. However a large amount of recent activity
means that there is only a very restricted range of possible counterexamples.
Moreover this recent progress, using work \cite{wis} of Wise on virtually
special groups, unexpectedly shows linearity over $\Z$. Yi Liu proved
in \cite{liu} that graph manifolds with a metric of non positive curvature
have fundamental group linear over $\Z$. With Agol's work in \cite{agar}
establishing this for hyperbolic 3-manifolds and \cite{ppw} covering
3-manifolds with at least one hyperbolic piece of the JSJ decomposition,
this only leaves closed 3-manifolds without virtually special fundamental
group, which by these results is equivalent to not possessing a metric
of non positive curvature. Although some Siefert fibre spaces fall into
this category, their fundamental groups are known to be linear over $\Z$
so the question of linearity of finitely generated 3-manifold groups
is now only open for closed graph manifolds which do not have a metric of non
positive curvature. Thus it might be that every 3-manifold group is linear
over $\Z$ which would have been a big surprise until very recently.
However a word of warning may be in order because these faithful
$\Z$-representations are obtained through a long process of argument and
are expected to be of extremely high dimension, so would not be
easy to construct directly.

Let us now move from linearity over $\Z$ to $\C$, where we have the same
3-manifold groups for which linearity is still open. Assuming that they
are linear or removing them from the discussion as appropriate, we can 
further ask: is there $n\in\N$ such that all finitely generated 3-manifold
groups embed in $GL(n,\C)$? This question does seem to take on a
different flavour because if $H\leq GL(n,\C)$ is an index $i$ subgroup of $G$
then we can say straight away that $G$ is linear because the induced
representation shows that $G\leq GL(in,\C)$. However this is no good for
our new question unless we have a bound on $i$. Now all hyperbolic 3-manifold
groups embed in $\slt$ but other 3-manifold groups can too: for instance
in \cite{busl} we showed that on identifying two copies of the figure 8
knot along the common torus boundary, the resulting closed 3-manifold has
a fundamental group which sometimes does and sometimes does not embed in
$\slt$, depending on the identification. As $\slt\leq GL(4,\R)$ because
a $\C$-linear map of $\C^2$ is an $\R$-linear map of $\R^4$, it seems
reasonable to pose as Thurston did in the Kirby problem list the question
of whether every finitely generated 3-manifold group embeds in $GL(4,\R)$.
(We know of no reference to this in Thurston's own writing, nor is there
any indication of whether he thought it true or false.)

In this paper we show that this question has a negative answer, even on
replacing $\R$ with any field of any characteristic. We describe the
3-manifold in Section 2 and give details of its fundamental group but
here we can summarise it thus: take two copies of the product of the
punctured torus and the circle and form the graph manifold by identifying
the boundary tori, with some conditions on the monodromy. The whole
argument relies only on using Jordan normal form up to 4 by 4 matrices
and considering the centraliser of a matrix in Jordan normal form.
However the key idea is this: the circle in the first product 3-manifold
requires a matrix having a large centraliser (by which we mean it
contains a non abelian free group). But as we do not allow ourselves to
identify the two circles, this centraliser cannot contain the whole 
3-manifold group. This argument applies also to the circle on the other
side and if these two elements are both diagonalisable then they are
simultaneously diagonalisable as they commute. This forces a block
structure for all the matrices in each of the two pieces of the graph
manifold and in Section 3 we show by an easy examination of the possible
cases for both block decompositions in 4 dimensions that this cannot
occur, because the diagonal entries of the circle elements will be roots
of unity so these elements will have finite order.

In Section 4 we show how this conclusion that the eigenvalues are roots of
unity generalises to arbitrary matrices over an algebraically closed
field, by replacing simultaneous diagonalisation by simultaneous
triangularisation. This then allows the positive characteristic case to
be eliminated first in Section 5 by a quick argument, leaving only the
field $\C$ without loss of generality. However in Section 6 we now have to
deal with the circle elements having more complicated Jordan normal forms.
Although their eigenvalues are still roots of unity, such matrices may
of course have infinite order in the characteristic zero case. This
section requires some rather more specialised arguments which we feel
would not extend quickly to dimensions above 4, unlike those in the
earlier sections. However the basis of these arguments is just taking each
possible Jordan normal form for the circle elements and working out the
centralisers.

In the last section we make a few related comments, including noting that
our graph manifolds have already appeared in the literature where they were
shown not to have a metric of non positive curvature (so the linearity of
these 3-manifold groups is still open) and to be non fibred but virtually
fibred.

\section{Description of the graph manifolds}

We can form a closed orientable graph 3-manifold in the following way: let
$S_{g,1}$ be the compact orientable surface of genus $g\geq 1$ with one
boundary component. We know that $\pi_1(S_{g,1})=F_{2g}$, the free group
of rank $2g$, and we let $A\in F_{2g}$ be the element given by the
boundary curve (oriented in some way). On forming the product manifold
$M_1=S_{g,1}\times S_1$ (which can be regarded as a trivial Siefert
fibre space) we have that the group $G_1=\pi_1(M_1)$ is isomorphic to
$F_{2g}\times\Z$ with the element $S$ generating $\Z$ being in the
centre of this fundamental group. Moreover we have 
$\langle A,S\rangle=\Z\times\Z$ as this forms the fundamental group of the
boundary torus $\partial M_1$.

We now take another manifold $M_2$ of this type with fundamental group
$G_2$ (here the genus $g'$ of our new surface $S_{g',1}$ does not need
to equal $g$, although in Section 6 we will require that $g=g'=0$)
with $B$ the corresponding 
peripheral element of $S_{g',1}$ and $T$ the equivalent generator of the
centre of $\pi_1(S_{g',1}\times S_1)$. Let $M$ be the closed 
orientable graph
manifold $M_1\#_fM_2$ where $f:\partial M_1\rightarrow\partial M_2$ is an
orientation reversing homeomorphism of the torus which identifies the
boundaries of the two 3-manifolds. This means that $\pi_1(M)$ is equal to
the amalgamated free product $(F_{2g}\times\Z)*_\theta(F_{2g'}\times\Z)$
where $\theta:\langle A,S\rangle\rightarrow\langle B,T\rangle$ is an
isomorphism. The automorphisms of $\Z\times\Z$ are of course elements
of $GL(2,\Z)$ so we have integers $i,j,k,l$ with $il-jk=\pm 1$
such that $B=A^iS^j$ and
$T=A^kS^l$. Although we might need to worry about the sign as regards
the orientability of $M$, we can assume that $il-jk=1$ for $\pi_1(M)$
because we can replace $T$ by $T^{-1}$ (thus $k$ and $l$ by $-k$ and $-l$)
without changing the group. For here on we do not consider 3-manifolds as
we only need to examine the group $G=\pi_1(M)$, although we now must note
the fact that $A$, and $B$, is equal to a product of commutators
in $G_1$, respectively $G_2$.
 
\section{Diagonalisable peripheral elements over $\C$}

One of the most basic but useful facts in linear algebra is that if
two $n\times n$ matrices $X,Y$
with entries in a field $\F$ are separately
diagonalisable over $\F$ and they commute then they are simultaneously
diagonalisable. This is because $Y$ maps the $\lambda$-eigenspace
$E_\lambda(X)=\{v\in\F^n:Xv=\lambda v\}$ of $X$ into itself and so we can 
diagonalise $Y$ when restricted to each $E_\lambda(X)$. Thus on taking our
group $G$, we assume for the remainder of this section
that both the peripheral elements $A$ and $S$ of $\pi_1(M_1)$
are diagonalisable over $\F$, with the other cases being
dealt with in the later sections. We take $\F=\C$ for definiteness here
and as the case of most interest, but the arguments are valid in any field
on interpreting the phrase root of unity as an element of finite order in
the multiplicative group $\F-\{0\}$. We further assume 
throughout the rest of the paper
that none of $i,j,k,l\in\Z$ are equal to zero.

Consequently if $G$ embeds as a subgroup of $GL(n,\C)$ for some $n$,
we can conjugate $G$ so that both $A$ and $S$ are diagonal matrices, as well as
$B$ and $T$ which are products of these two elements. Now all of the non
abelian free group $F_{2g}$ commutes with $S$, so that $S$ cannot have 
distinct diagonal entries which would imply an abelian centraliser in
$GL(n,\C)$. Thus we can choose a basis $e_1,\ldots ,e_n$ of eigenvectors of 
$S$ with basis elements picked from each of the $d$ eigenspaces $U_i$ 
and this provides a direct sum decomposition of
$\C^n$ as $U_1\oplus\cdots\oplus U_d$. Now as any $g\in G_1$ commutes
with $S$ we have that $g$ splits into $d$ square block matrices with
respect to this basis. However the same argument also applies to $T$ and
$G_2$ with the same basis, except we would need to apply a permutation
of $\{1,\ldots ,n\}$ if we wanted to group the $d'$ eigenspaces 
$V_1\oplus\cdots\oplus V_{d'}$ of $T$
together, according to the repeated entries on the diagonal of $T$.

The idea now is to look at the equations satisfied by the diagonal 
entries of the four matrices $A,B,S,T$. We have $2n+d+d'$ variables 
in these entries with the identities $B=A^iS^j$, $T=A^kS^l$ providing 
$2n$ equations, which are linear homogeneous equations for elements in 
the abelian group $\C-\{0\}$, written multiplicatively. Therefore it seems
we ought to be able to find non trivial
solutions to these equations, but we also need to recall that $A$ and $B$
are elements in the commutator subgroup of $G_1$ and $G_2$ respectively.
As the determinant is a homomorphism from $GL(n,\C)$ to $\C-\{0\}$, we
must have $\mbox{det}(A)=\mbox{det}(B)=1$. Moreover as all elements of
$G_1$ have the same block structure, each block of $A$ is also a
product of commutators 
in $GL(m,\C)$ for the relevant $m<n$ and so they all have
determinant 1 as well. The same applies to $B$ so we now have $d+d'$
further homogeneous equations to satisfy, thus we no longer can
guarantee the existence of non trivial solutions. 

We have seen that $d,d'<n$ and we also note that $d$ (and $d'$) is greater
than 1 as otherwise
$S$ (or $T$) would be a scalar multiple $\lambda I_n$ of the identity.
But $B=A^iS^j$ with $\mbox{det}(A)$ and $\mbox{det}(B)$ equal to 1
implying that $\mbox{det}(S)$ is a root of unity. Thus $\lambda$ is a
root of unity too which means that $S$ has finite order. 
Similarly the decomposition of $\C^n$ into the $T$-eigenspaces $\{V_j\}$
cannot be the same as that for the $S$-eigenspaces $\{U_i\}$, nor can we
have every $V_j$ contained in some $U_i$ for $i$ depending on $j$.
Otherwise every element of $G_1$ and $G_2$ preserves these $S$-eigenspaces,
thus the restrictions of both $A$ and $B$ to each $U_i$ have determinant 1
and the argument just given applies to show that the eigenspace $U_i$
corresponds to an eigenvalue which is a root of unity. As this applies for
all $U_i$, once again $S$ has finite order.
Of course the same holds with $S$ and $T$ swapped.

As a warm up we first
consider the lower dimensions. For two dimensions, note that
$G_1=F_{2g}\times\Z$ does embed in $GL(2,\C)$ but only by taking $S$ equal
to $\lambda I$ (for $\lambda$ not a root of unity) so this would imply that
$S$ commutes with all of $G$.

\begin{prop}
The group $G$ as above does not embed in $GL(3,\C)$ if the elements $A,S$
are both diagonalisable over $\C$ and $j\neq 0$.
\end{prop}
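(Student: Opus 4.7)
The plan is to simultaneously diagonalise all four matrices $A,B,S,T$ and then run a short case analysis on how the eigenspace partitions of $S$ and $T$ sit relative to each other in $\C^3$. All four pairwise commute, since $B$ and $T$ lie in the abelian subgroup $\langle A,S\rangle$, and since $A$ and $S$ are diagonalisable by hypothesis so are $B=A^iS^j$ and $T=A^kS^l$; we may therefore choose a basis $e_1,e_2,e_3$ simultaneously diagonalising all four. The general discussion preceding the proposition forces $1<d,d'<3$, hence $d=d'=2$ and, after permuting the basis, $S=\mathrm{diag}(\lambda,\lambda,\mu)$ with $\lambda\neq\mu$. The $T$-partition of $\{e_1,e_2,e_3\}$ is therefore also of type $(2,1)$, and either coincides with the $S$-partition (a case already handled, since it forces $S$ to have finite order) or crosses it, in which case, after swapping $e_1$ and $e_2$ (a symmetry of $S$), we may assume $T=\mathrm{diag}(\alpha,\beta,\alpha)$ with $\alpha\neq\beta$.

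With $A=\mathrm{diag}(a_1,a_2,a_3)$, the core of the argument is to exploit two different block decompositions at once: $A$ in the $U$-block structure and $B$ in the $V$-block structure are each products of commutators, so each block has determinant one. The $U$-conditions on $A$ give $a_3=1$ and $a_1a_2=1$. The $V$-conditions on $B=A^iS^j$, applied to the one-dimensional block $V_2=\langle e_2\rangle$ and the two-dimensional block $V_1=\langle e_1,e_3\rangle$, give $a_2^i\lambda^j=1$ and $(a_1a_3)^i(\lambda\mu)^j=1$, which using the previous two relations simplify to
\[ a_1^i=\lambda^j \quad\text{and}\quad \lambda^{2j}\mu^j=1. \]
One further relation comes from $T|_{V_1}$ being scalar: applying this to $T=A^kS^l$ and using $a_3=1$ gives $a_1^k=(\mu/\lambda)^l$.

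Combining the two power equations in $a_1$ by raising them to the powers $l$ and $j$ and dividing, the identity $il-jk=1$ yields $a_1=(\lambda^2/\mu)^{jl}$; using $\lambda^{2j}\mu^j=1$ this reduces to $a_1=\lambda^{4jl}$, and substituting back into $a_1^i=\lambda^j$ produces $\lambda^{j(4il-1)}=1$. Since $j\neq 0$ by hypothesis and $4il$ cannot equal $1$ for integers $i,l$, $\lambda$ is a root of unity, and then $\mu^j=\lambda^{-2j}$ makes $\mu$ one too, so $S$ has finite order in $GL(3,\C)$---contradicting that $S$ generates a $\Z$-factor of $G_1\leq G$. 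The main obstacle is essentially bookkeeping: one has to keep the two block structures ($U$-blocks for $G_1$, $V$-blocks for $G_2$) clearly separated on a single common basis, and verify that the crossing case really is unique up to the $e_1\leftrightarrow e_2$ symmetry of $S$. No deeper difficulty is expected in dimension three.
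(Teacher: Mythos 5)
Your proposal is correct and takes essentially the same approach as the paper: simultaneous diagonalisation, the two crossing $(2,1)$ block partitions for $G_1$ and $G_2$, determinant-one conditions on the blocks of $A$ and $B$, and the observation that the resulting multiplicative relations force $\lambda$ (hence $S$) to have finite order since $4il-1\neq 0$. The only cosmetic difference is that you carry the exponents through directly to reach $\lambda^{j(4il-1)}=1$, whereas the paper first clears roots of unity (setting $\mu=\lambda^{-2}$) and then appends a short lemma replacing $A,S,B,T$ by $N$-th powers to justify that simplification; your bookkeeping makes that extra step unnecessary.
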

\begin{proof}
We do the simultaneous diagonalisation as above, along with the splitting
into blocks, with respect to the basis $e_1,e_2,e_3$. Without loss of
generality we can only have elements of $G_1$ having block structure
$\langle e_1,e_2\rangle\oplus\langle e_3\rangle$ and 
$\langle e_1,e_3\rangle\oplus\langle e_2\rangle$ for $G_2$, because we saw
above that the block structures cannot be the same.
On imposing the
determinant condition for each block of $A$ and $B$, our four peripheral
elements must have the following form, where we are writing diagonal
matrices as column vectors:
\[A=\sma{r} a\\1/a\\1 \fma,S= \sma{c}\lambda\\ \lambda \\ \mu\fma;
B=\sma{r} \alpha \\1\\1/\alpha\\ \fma, T= \sma{c} \eta\\ \theta\\ \eta \fma\]
for $a,\lambda,\mu,\alpha,\eta,\theta\in\C-\{0\}$. Now the identity $B=A^iS^j$
implies that $(\mbox{det}\,S)^j=1$. For the moment we ignore complex roots
of unity other than one and thus assume that $\mu=1/\lambda^2$. Then the
double appearance of $\eta$ in $T$ implies that $a^k\lambda^{3l}=1$ but
the 1 in $B$ means that $a^i=\lambda^j$, thus $\lambda^{kj+3li}=1$. As
$kj+3li=4li-1$, this means that $\lambda$ (and from here all other
variables) is a root of unity and hence
$S$ is the identity under our assumption unless $li=1/4$ which cannot
happen.

To deal with roots of unity in general, suppose that we have found a solution
to the entries of $A,S,B,T$ in which roots of unity appear. Let $N$ be the 
least common multiple over all the orders of these roots. Then on replacing
$A,S,B,T$ with $A^N,S^N,B^N,T^N$ the same equations between these matrices
continue to hold, as well as the determinant being 1 in each block of
$A^N$ and $B^N$, but now all roots of unity have been replaced by 1.
\end{proof} 

\begin{thm} The group $G$ above does not embed in $GL(4,\C)$ if the
elements $A, S$ are both diagonalisable over $\C$ and none of
$i,j,k,l$ are equal to 0.
\end{thm}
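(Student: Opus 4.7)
The plan is to mimic the argument of Proposition~3.1 but now exhaust the possible block decompositions in dimension~$4$. After conjugating so that $A,S,B,T$ are simultaneously diagonal, the $S$-eigenspace decomposition of $\C^4$ and the $T$-eigenspace decomposition give two partitions of the basis $e_1,e_2,e_3,e_4$. Since $d,d'\geq 2$ and a decomposition with $d=4$ (all distinct entries) would force an abelian centraliser containing $F_{2g}$, the only possible partitions are $3+1$, $2+2$ and $2+1+1$ for each of $S$ and $T$. Moreover, by the observations at the end of the $\C$-setup, I can discard any pairing in which the $T$-partition equals the $S$-partition or refines it in the sense that every $V_j\subseteq U_i$, for otherwise one obtains $\det(S)$ a root of unity on each $U_i$ and so $S$ has finite order.

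For each remaining pair of partitions I write the four diagonal matrices as column vectors with unknown entries in $\C-\{0\}$, impose the block determinant conditions $\det A|_{U_i}=\det B|_{V_j}=1$ (which follow because $A$, $B$ are products of commutators in $G_1$, $G_2$ and every element of $G_1$ respects the $U_i$-decomposition while every element of $G_2$ respects the $V_j$-decomposition), and then read off the component equations of $B=A^iS^j$ and $T=A^kS^l$. These are multiplicative linear equations in the unknowns whose exponents form an integer matrix depending on $i,j,k,l$ and the partitions. I expect that in every case this system forces each unknown to be a root of unity, so the finite-order trick at the end of Proposition~3.1 applies: replacing $A,S,B,T$ by a common power $A^N,S^N,B^N,T^N$ keeps all relations valid and trivialises $S$, forcing $S$ to be torsion and hence contradicting that $S$ generates the central $\Z$ factor of $G_1$.

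Concretely, the cases to check are the unordered pairs drawn from $\{3+1,\,2+2,\,2+1+1\}$ together with the cross cases; for example, in the $(3+1,3+1)$ case one may (after relabelling the basis) arrange the two ``big'' blocks of $S$ and $T$ to overlap in exactly $e_1,e_2$, making $A$ diagonal of the form $(a,1/a,1,1)$ and $B$ of the form $(\alpha,1,1/\alpha,1)$ (or one of a handful of permutations forced by the partition constraint), and the identities $B=A^iS^j$, $T=A^kS^l$ yield a small $2\times 2$ integer linear system in $\log a,\log\lambda$ whose determinant is a nonzero linear expression in $i,j,k,l$ with integer coefficients; the nonvanishing of this determinant is exactly what forces the roots of unity conclusion. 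The $(2+2,2+2)$ and $(2+2,2+1+1)$ and $(2+1+1,2+1+1)$ cases are analogous but require a slightly larger bookkeeping of the relative positions of the two decompositions.

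The main obstacle I foresee is simply the case distinctions: one must enumerate, up to obvious symmetries (swap of $S$ and $T$, relabelling of basis, and the $G_1\leftrightarrow G_2$ symmetry exchanging $(i,j,k,l)$ with $(l,-j,-k,i)$), all admissible relative positions of the two partitions in $\C^4$ and, in each, verify that the resulting $2\times 2$ or $3\times 3$ integer coefficient matrix for the multiplicative equations on the unknowns has nonzero determinant under the hypothesis that none of $i,j,k,l$ vanishes. The hypothesis $il-jk=1$ will be what makes this determinant nonvanishing in the borderline combinations, just as $4li-1\neq 0$ was the decisive inequality in the three dimensional case.
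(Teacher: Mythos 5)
Your plan is essentially the paper's proof: diagonalise simultaneously, impose the block determinant conditions coming from $A$, $B$ being products of commutators, and show case by case over the admissible partition pairs $\{3{+}1,2{+}2,2{+}1{+}1\}$ that the resulting multiplicative system forces all entries to be roots of unity. The one labour-saving device the paper uses that you do not is a preliminary reduction: if a size-$1$ block appears in the \emph{same} position for both $G_1$ and $G_2$, then $A$ and $B$ have entry $1$ there, forcing the corresponding entries of $S$ and $T$ to be roots of unity, and one can then delete that coordinate and reduce to the three-dimensional case (Proposition~3.1); this trims the enumeration to six cases, whereas your direct enumeration would have a few more (all of which, however, would also work out). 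Two minor slips worth noting: in the $(3{+}1,3{+}1)$ case the determinant-one condition on the $3$-block gives $A$ of the form $(a,b,(ab)^{-1},1)$, not $(a,1/a,1,1)$; and the ``determinants'' of the integer systems that control the conclusion (e.g.\ $4li-1$, $9il-1$, $2il-1$, $3il-1$ in the paper's cases) are \emph{quadratic} in $i,j,k,l$, not linear, with their nonvanishing coming from the fact that the product of two nonzero integers cannot equal the small rational constants that appear.
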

\begin{proof}
After the simultaneous diagonalisation we have both the block structure
for $G_1$ and for $G_2$. The possibilities for the block sizes on either side,
which need not be the same, are (3,1), (2,2) and (2,1,1). We first note that
if the same block of size 1 appears in both $G_1$ and $G_2$ then there is an
entry of 1 in the same place of $A$ and of $B$ (coming from the determinant
condition), thus $A^iS^j=B$ means that the equivalent entry of $S$ is also
(a root of) unity, and thus so is that of $T$ using $A^kS^l=T$. Thus on
taking this block of size 1 and the complementary block of size $n-1$
(which here is 3), we obtain an expression of $G$ as a subdirect product
inside $H_1\times H_2$, where $H_1$ is the image of $G$ under the homomorphism
which is restriction to the size 1 block, and similarly $H_2$ for the
size $n-1$ block. But $H_1$ being abelian implies that
the latter homomorphism is an isomorphism,
because an element of $G$ that restricts to $I_{n-1}$ will be in the
centre of $G$. Consequently we 
we can delete the size 1 block from our matrices without changing the
group, thus reducing the problem to one lower
dimension which is covered by Proposition 3.1.

We now outline the argument in each of the possible cases and will henceforth
write all equations additively.\\
Case 1: Both blocks are (2,2).\\
Let the decomposition for $S$ be $\langle e_1,e_2\rangle\oplus\langle
e_3,e_4\rangle$. As noted earlier, this cannot be the same for $T$ so
without loss of generality we set this to be 
$\langle e_1,e_4\rangle\oplus \langle e_2,e_3\rangle$. This gives rise to
diagonal matrices
\[A=\sma{r} a\\-a\\b\\-b \fma,S= \sma{c}\lambda\\ \lambda \\ \mu\\ \mu\fma;
B=\sma{r} \alpha \\ \beta\\-\beta\\ -\alpha\fma, 
T= \sma{c} \eta\\ \theta\\ \theta\\ \eta \fma.\]
As before, considering determinants of $S$ and $T$, assuming $j\neq 0$ and 
ignoring roots of unity gives $\mu=-\lambda$ and $\theta=-\eta$. Then the
appearance of $\pm\alpha$ at the top and bottom of $B$ implies
$ia+j\lambda=\alpha=ib+j\lambda$, thus $a=b$ as $i\neq 0$. But now the
two $\eta$s in $T$ give $ka+i\lambda=\eta=-ka-i\lambda$ so $\eta=0=\theta$,
meaning that $T$ is the identity (or a power of $T$ is
on removing roots of unity). Now the top two equations for the entries
of $T$ tell us that $a$ and $\lambda$ are also roots of unity, so everything
is.\\
Case 2: Both blocks are (3,1).\\
We need not consider them to be the same decomposition so we can set them
equal to $\langle e_1,e_2,e_3\rangle\oplus\langle e_4\rangle$
for $S$ and $\langle e_1,e_2,e_4\rangle\oplus \langle
e_3\rangle$ for $T$, giving entries
\[A=\sma{r} a\\b\\-a-b\\0 \fma,S= \sma{c}\lambda\\ \lambda \\ \lambda\\ 
-3\lambda\fma;
B=\sma{r} \alpha \\ \beta\\0\\ -\alpha-\beta\fma, 
T= \sma{c} \eta\\ \eta\\ -3\eta\\ \eta \fma\] 
where we can again assume all determinants are 1, as $j\neq 0$. Also
$k\neq 0$ gives $a=b$ by examining the top two entries of $T$ so 
$\alpha=\beta$. Thus $ka+l\lambda=\eta=-3l\lambda$ and $-2ia+j\lambda=0$,
giving $(8il+jk)\lambda=0$. As $9il\neq 1$ we have $S=I$.\\
Case 3: Both blocks are (2,1,1).\\
As we do not need to put blocks of size 1 together, we will take
$\langle e_1,e_4\rangle\oplus\langle e_2\rangle\oplus\langle e_3\rangle$
for $S$ and $\langle e_2,e_3\rangle\oplus\langle e_1\rangle
\oplus\langle e_4\rangle$ for $T$ and set
\[A=\sma{r} a\\0\\0\\-a \fma,S= \sma{c}\lambda\\ \mu\\ \nu\\ 
\lambda\fma;
B=\sma{r} 0\\ \alpha \\ -\alpha\\0\fma, 
T= \sma{c} \theta\\ \eta\\ \eta\\ \nu \fma\]
which gives $ia+j\lambda=0=-ia+j\lambda$, forcing $A=I$ and all entries
are again roots of unity.\\
Case 4: Blocks of form (2,2) and (2,1,1).\\
We can swap $G_1$ and $G_2$ if necessary so that $S$ has the (2,2) blocks.
This has the effect of replacing the matrix $\sma{rr}i&j\\k&l\fma$ with its
inverse, so the entries will still be non zero.
Then we have without loss of generality
\[A=\sma{r} a\\-a\\b\\-b \fma,S= \sma{c}\lambda\\ \lambda\\ -\lambda\\ 
-\lambda\fma;
B=\sma{r} \alpha \\ 0\\0\\-\alpha\fma, 
T= \sma{c} \theta\\ \eta\\ \nu\\ \theta \fma\]
so the middle two entries of $B$ give $a=b$ and then the outer entries of $T$
imply $\theta=0$. But now we have $ia=j\lambda$ and $ka+l\lambda=0$ which
means that $(2il-1)a=0$ so again $A=I$ and we only have roots of unity.\\
Case 5: Blocks of form (3,1) and (2,1,1).\\
Here we can assume we have blocks $\langle e_1,e_2,e_3\rangle\oplus
\langle e_4\rangle$ for $S$ and $\langle e_1,e_4\rangle\oplus\langle
e_2\rangle\oplus\langle e_3\rangle$ for $T$ and set
\[A=\sma{r} a\\b\\-a-b\\0 \fma,S= \sma{c}\lambda\\ \lambda\\ \lambda\\ 
-3\lambda\fma;
B=\sma{r} \alpha \\ 0\\0\\-\alpha\fma, 
T= \sma{c} \theta\\ \eta\\ \nu\\ \theta \fma\]
which implies using $\alpha$ that $ia=2j\lambda$ and $ka+4l\lambda=0$ using
$\theta$, so $(2kj+4il)\lambda=0$ but $6il\neq 2$ so $S=I$ and only roots
of unity appear.\\
Case 6: Blocks of form (3,1) and (2,2).\\
On setting blocks of $\langle e_1,e_2,e_3\rangle\oplus\langle e_4\rangle$
and $\langle e_1,e_2\rangle\oplus\langle e_3,e_4\rangle$ we obtain
\[A=\sma{r} a\\b\\-a-b\\0 \fma,S= \sma{c}\lambda\\ \lambda\\ \lambda\\ 
-3\lambda\fma;
B=\sma{r} \alpha \\ -\alpha\\ \beta\\-\beta\fma, 
T= \sma{c} \theta\\ \theta\\ \eta\\ \eta \fma\]
so the repeated $\theta$ gives $a=b$ and the repeated $\beta$ gives 
$ia+j\lambda=0$, but the repeated $\eta$ implies $ak=2l\lambda$ so
$(3il-1)\lambda=0$, giving $S=I$ with all roots of unity again.\\
\end{proof}

\section{Diagonal entries in the non diagonal case}
  
We assumed throughout the last section that our peripheral elements
could be diagonalised but now we will see that in the general case
we can still use standard linear algebra to conclude that
all eigenvalues of these peripheral elements are again roots of unity.

Given any element $X\in GL(n,\C)$, or if given an arbitrary field $\F$
we can replace $\C$ by the relevant algebraic closure $\overline{\F}$,
we know as part of the theory of Jordan
normal form that $\C^n$ is spanned by its generalised eigenspaces
\[G_{\lambda}(X)=\{v\in\C^n:(X-\lambda I)^mv=0\mbox{ for some } m\in\N\}.\]
Now on taking another $Y\in GL(n,\C)$ such that $XY=YX$, we have
$Y(G_\lambda(X))\subseteq G_\lambda(X)$ because $Y$ also commutes with
$(X-\lambda I)^m$. Thus if $V_1\oplus\cdots\oplus V_e$ is the decomposition
of the peripheral element 
$S$ into its generalised eigenspaces then this also
provides a block decomposition for all elements in the group $G_1$. Thus
not only does this apply to $A\in G_1$ but it also means that each square
block in $A$ has determinant 1 as before. Unlike the diagonalisable case
though, it is not guaranteed that an element which decomposes into these
blocks commutes with $S$.

Now we consider $T=A^kS^l$ which also commutes with $S$, thus preserves the
decomposition $V_1\oplus\cdots\oplus V_e$. Hence on restricting 
$S$ and $T$ to
each $V_c$, this subspace further splits into a decomposition
$V_c=W_{c,1}\oplus\cdots\oplus W_{c,n_c}$ where $W_{c,m}$ is contained in a
single generalised eigenspace of $T$. As $W_{c,m}$ is the intersection of
a generalised eigenspace for $S$ and that for $T$, it is invariant under
$S,T$ and any element commuting with both of these.
We also note that as any element $g\in G_2$ commutes with $T$, the
decomposition $V'_1\oplus\cdots\oplus V'_{e'}$ into generalised eigenspaces
for $T$ also provides a sum into square blocks for $g$. Thus as before
each block of $B$ is a commutator and so has determinant 1.

We now restrict $A$ and $S$ to each subspace $W_{c,m}$ and this is
invariant under both maps. Although our matrices need not be diagonalisable,
we know that over $\C$ (or $\overline{\F}$)
any matrix can be conjugated to be upper triangular.
We would like to have
an equivalent version of the ``commute and diagonalisable implies
simultaneously diagonalisable'' result in the upper triangular case and
this is provided by reference to Wikipedia 
(pagename ``Simultaneous triangularisability'' which gives the credit to
Frobenius). One can indeed
replace ``diagonalisable'' with ``upper triangularisable'' and
``simultaneously diagonalisable'' with 
``simultaneously upper triangularisable'' in the above for two
commuting matrices $X,Y$ because we have a non trivial eigenspace
$E_\lambda(X)$ containing the first element of a basis making $X$ upper
triangular. This eigenspace is invariant under $Y$, so on restricting $Y$
we obtain a common eigenvector for $X$ and $Y$. By
taking this as our first basis vector, we can now take a quotient space
and reduce the dimension by one, then continue by induction.

We now apply this result to $A$ and $S$ restricted to each $W_{c,m}$ and
put these bases together
together to obtain a basis for $\C^n$ where these two matrices are
upper triangular. This means that $B=A^iS^j$ and $T=A^kS^l$ are also
upper triangular with the equations for the diagonal elements of $B$
and $T$ involving only the diagonal elements of $A$ and $S$, so these
equations will be exactly the same as in the last section. 
Moreover the determinant
equations for $A$ and $B$ will involve restricting to a basis for each $V_c$,
respectively $V'_c$, and we can do this by taking the appropriate elements
of the basis from each $W_{c,m}$. Thus we are taking determinants of upper
triangular subblocks which is just the product of diagonal terms, so any set
of variables and equations for the diagonal elements has already appeared
in Section 3. This gives us:
\begin{thm} If our group $G$ embeds in $GL(n,\F)$ for any field $\F$ with
algebraic closure $\overline{\F}$ then it can be conjugated in
$GL(n,\overline{\F})$ so that the peripheral elements $A,S$ (and hence
$B,T$) are upper triangular with all diagonal elements roots of unity in
$\overline{\F}$.
\end{thm}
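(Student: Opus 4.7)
The plan is to formalize the argument sketched in the paragraphs immediately preceding the theorem: replace the simultaneous diagonalization of Section 3 by simultaneous upper triangularization, and check that the resulting diagonal equations are identical to those already solved.

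First I would reduce to working over $\overline{\F}$ by extending scalars, so we may assume $G \leq GL(n,\overline{\F})$ with $\overline{\F}$ algebraically closed. Next, take the generalized eigenspace decomposition $\overline{\F}^n = V_1 \oplus \cdots \oplus V_e$ of $S$. Since $S$ is central in $G_1$, every $g \in G_1$ commutes with $S$ and hence preserves each $V_c$, so $g$ has block diagonal form with respect to this decomposition. In particular $A$ does, and because $A$ is a product of commutators in $G_1$, each of its blocks has determinant $1$. Since $T = A^k S^l$ also commutes with $S$, it preserves the $V_c$, and I can further decompose each $V_c$ into generalized $T$-eigenspaces $V_c = W_{c,1} \oplus \cdots \oplus W_{c,n_c}$. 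Each $W_{c,m}$ is invariant under any matrix commuting with both $S$ and $T$, in particular under $A$ and $S$ themselves.

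Now I apply the Frobenius simultaneous upper triangularization theorem to the commuting pair $A|_{W_{c,m}}, S|_{W_{c,m}}$, obtaining a basis of $W_{c,m}$ in which both restrictions are upper triangular. Concatenating these bases across all $(c,m)$ produces a basis of $\overline{\F}^n$ in which $A$ and $S$ are simultaneously upper triangular, and hence so are $B = A^i S^j$ and $T = A^k S^l$. Because upper triangular matrices multiply diagonally, the equations relating the diagonal entries of $A,S,B,T$ are exactly the multiplicative equations appearing in Section 3. The dimensions $d,d'$ of the eigenspace decompositions of $S$ and $T$ there now become $e,e'$ here, but the logical content is unchanged.

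The one point needing verification, which I regard as the main obstacle, is that the determinant-$1$ constraints on the blocks of $A$ and $B$ survive intact after triangularization. For $A$ this is immediate: its blocks with respect to $V_1 \oplus \cdots \oplus V_e$ are upper triangular in our final basis (since each $W_{c,m}$ lies inside some $V_c$), so their determinants are products of the corresponding diagonal entries and equal $1$. For $B$ I use the symmetric observation: any $g \in G_2$ commutes with $T$, so has block diagonal form with respect to the $T$-decomposition $V'_1 \oplus \cdots \oplus V'_{e'}$, and each block of $B$ is a commutator in that block's copy of $GL$, hence has determinant $1$; again each $W_{c,m}$ lies in some $V'_{c'}$, so these block determinants are products of diagonal entries in our basis.

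Putting this together, the diagonal entries of $A, S, B, T$ satisfy precisely the system of multiplicative homogeneous equations (from $B = A^i S^j$, $T = A^k S^l$) and determinant conditions (on each $V_c$-block of $A$ and each $V'_{c'}$-block of $B$) analyzed in Proposition 3.1 and Theorem 3.2. The conclusion reached there — that every diagonal entry must be a root of unity — then applies verbatim, yielding the statement of the theorem.
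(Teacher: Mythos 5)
Your proposal reproduces the paper's own argument step for step: decompose by the generalized eigenspaces of $S$, refine by those of $T$ to get the $W_{c,m}$, apply Frobenius simultaneous upper triangularization to $A$ and $S$ on each $W_{c,m}$, and observe that the diagonal equations and the determinant-one constraints on the $V_c$-blocks of $A$ and the $V'_{c'}$-blocks of $B$ reduce to products of diagonal entries, so the analysis of Section 3 applies unchanged. The extra care you take in checking that the block-determinant conditions survive the triangularization is exactly the point the paper also flags, so this is the same proof with a few of the implicit verifications made explicit.
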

\begin{proof}
As just mentioned, we can assume by conjugating $G$ that $A$ and $S$ are
both upper triangular. The arguments given in Section 3 that the number
of eigenspaces of $S$ (and $T$) is strictly between 1 and $n$ still applies
here, as does the argument that not every eigenspace of $T$ is contained
in an $S$-eigenspace (or vice versa) because any determinant which needs
to be evaluated will just be a product of elements on the diagonal of
$A,S,B$ or $T$. Thus the argument given in Theorem 3.2 that the only
solutions to the equations for the diagonal elements yield roots of
unity applies here too.
\end{proof} 

\section{The positive characteristic case}

We are now in a position to first eliminate $G$ being a subgroup of
$GL(4,\F)$ when $\F$ has positive characteristic. This follows from
the following lemma for which we are unable to provide a reference:
presumably it is so well known to those working in positive characteristic
that it might not need writing down.
\begin{lem} Suppose that $X$ is a matrix in $GL(n,\F)$ where $\F$ has
positive characteristic $p$.
If all the eigenvalues of $X$ are roots of unity
in $\overline{\F}$ then $X$ has finite order.
\end{lem}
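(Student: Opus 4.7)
The plan is to reduce to showing that the semisimple and unipotent parts of $X$ each have finite order, using the Jordan--Chevalley (multiplicative) decomposition together with the Freshman's dream in characteristic $p$.

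First I would pass to $\overline{\F}$, which is harmless because the order of $X$ in $GL(n,\F)$ equals its order in $GL(n,\overline{\F})$. Over $\overline{\F}$ we may conjugate $X$ into Jordan normal form and write $X = SU$, where $S$ is the diagonal part (carrying the eigenvalues of $X$) and $U = I + N$ is unipotent, with $N$ strictly upper triangular and $SU = US$. The hypothesis that the eigenvalues of $X$ are roots of unity in $\overline{\F}$ says exactly that every diagonal entry of $S$ has finite order, so $S$ itself has finite order; pick $m\in\N$ with $S^m = I$.

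Next I would handle $U$. Since $N$ is nilpotent of size $n$, we have $N^n = 0$, and we may choose $r\in\N$ with $p^r\geq n$. Because $I$ and $N$ commute and the ground field has characteristic $p$, the Freshman's dream gives
\[U^{p^r} = (I+N)^{p^r} = I^{p^r} + N^{p^r} = I,\]
so $U$ has finite order dividing $p^r$. As $S$ and $U$ commute this yields
\[X^{mp^r} = (SU)^{mp^r} = S^{mp^r}U^{mp^r} = I,\]
and so $X$ has finite order, as required.

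I do not anticipate a serious obstacle here: the only point that needs a little care is the Jordan--Chevalley decomposition, but over an algebraically closed field this is immediate from Jordan normal form, and the rest is just the standard characteristic $p$ trick that makes unipotent matrices have $p$-power order.
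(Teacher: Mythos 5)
Your proof is correct and takes essentially the same route as the paper: triangularize over $\overline{\F}$, note the diagonal part has finite order since the eigenvalues are roots of unity, and kill the unipotent part with the characteristic-$p$ binomial (``Freshman's dream'') argument. The only cosmetic difference is that you phrase the split via the multiplicative decomposition $X = SU$, whereas the paper writes $X = D + N$ additively and first passes to a power of $X$ to make $D = I$; the underlying mechanism is identical.
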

\begin{proof}
We conjugate $X$ in $GL(n,\overline{\F})$ so that it is upper triangular,
thus of the form $X=D+N$ where $D$ is diagonal and $N$ is upper triangular
with all diagonal entries equal to zero. By taking an appropriate power of
$X$, we can assume that $D=I$ and we know that $N^m=0$ for all $m\geq n$.
But $X^{p^k}=(I+N)^{p^k}=I+N^{p^k}$ because ${p^k} \choose r$ is 0 modulo
$p$ for all $0<r<p^k$ so we just need to take $p^k\geq n$.
\end{proof}
\begin{co} Our group $G$ does not embed in $GL(4,\F)$ if $\F$ is any field
of positive characteristic.
\end{co}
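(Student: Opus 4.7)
The plan is to suppose for contradiction that $G$ embeds as a subgroup of $GL(4,\F)$ with $\F$ of positive characteristic $p$, and then derive an order contradiction from the two results just proved. First I would apply Theorem 4.1 to conjugate $G$ inside $GL(4,\overline{\F})$ so that the peripheral elements $A$, $S$, $B$, $T$ are all upper triangular with every diagonal entry a root of unity in $\overline{\F}$. This step does the heavy lifting: it converts the group-theoretic hypothesis into a concrete statement about eigenvalues of $S$.

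Next I would feed $S$ into Lemma 5.1. Since $S$ lies in $GL(4,\overline{\F})$ and all its eigenvalues are roots of unity, the lemma gives that $S$ has finite order. The same conclusion applies to $T$ (and to $A$ and $B$), but I only need it for $S$.

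The contradiction then comes from the construction of $G$ in Section 2. The element $S$ generates the $\Z$-factor of the direct product $G_1 = F_{2g}\times\Z$, and in particular has infinite order in $G_1$. Since $G = G_1 *_\theta G_2$ is an amalgamated free product, $G_1$ injects into $G$, so $S$ has infinite order in $G$. A faithful representation $G \hookrightarrow GL(4,\F) \subseteq GL(4,\overline{\F})$ must therefore send $S$ to an element of infinite order, contradicting the conclusion of Lemma 5.1.

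I do not expect any real obstacle: Theorem 4.1 and Lemma 5.1 together do essentially all the work, and the only thing to check is that the image of $S$ must genuinely have infinite order, which is immediate from the fact that the vertex groups embed into an amalgamated free product.
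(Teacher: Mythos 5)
Your proof is correct and follows the same route as the paper: apply Theorem 4.1 to get roots-of-unity eigenvalues, then Lemma 5.1 to conclude finite order. The paper leaves implicit the final observation that $S$ has infinite order in $G$, which you spell out; that is the only difference.
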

\begin{proof}
On applying Theorem 4.1, Lemma 5.1 immediately tells us that $A,S,B,T$
have finite order.
\end{proof} 

\section{The characteristic zero case}

If $G$ embeds in $GL(4,\F)$ for $\F$ a field of characteristic zero then
we can say without loss of generality that $\F=\C$. This is because
$G$ is finitely generated so we can embed 
$\Q(x_1,\ldots ,x_n)$ into $\C$, where $x_1,\ldots ,x_n$ are the matrix
entries of a generating set for $G$. Unfortunately we are not yet finished
because of matrices such as $\sma{cc}1&1\\0&1\fma$ which have infinite
order despite all eigenvalues being roots of unity. We will need to
develop some rather ad hoc arguments in this section in order to finish
our proof (as well as introducing parity constraints on $i,j,k,l$ to
avoid having to treat too many cases). However it will be of great help
to mention now some standard but very useful facts about 2 by 2 matrices
which we will apply. At this point we set the genera $g,g'$ of the two
surfaces in Section 2 equal to 1 so that the peripheral elements $A$ and
$B$ are now commutators and not just products of commutators. Now suppose
$\gamma\in SL(2,\C)$ is a commutator $\alpha\beta\alpha^{-1}\beta^{-1}$
for $\alpha,\beta\in GL(2,\C)$. First $\mbox{det}(\gamma)=1$ but now suppose
$\gamma$ has repeated eigenvalues which must be both 1 or both $-1$. In the
first case $\langle\alpha,\beta\rangle$ must be a soluble group; indeed if
we conjugate so that $\gamma=\sma{cc}1&b\\0&1\fma$ then both $\alpha$ and
$\beta$ are forced to be upper triangular too. In the second case we have
the following lemma from the author's PhD (the appendix in \cite{bph}).

\begin{lem}
If $\alpha,\beta\in SL(2,\C)$ and 
\[
\alpha\beta\alpha^{-1}\beta^{-1}=\left( \begin{array}{rr}
-1 & -b \\ 0 & -1 \end{array} \right)\]
for some $b\in\C$
then we can simultaneously conjugate $\alpha$ and $\beta$ in $SL(2,\C)$
such that $\alpha\beta\alpha^{-1}\beta^{-1}$ is as above and
\[ \alpha^{-1}\beta^{-1}\alpha\beta=\left( \begin{array}{rr}
-1 & 0 \\ -b & -1\fma\]
(for a possibly different $b\in\C$)
whereupon there exist $z,w\in \C-\{0\}$ such that
\[\alpha= \left( \begin{array}{cc}
\frac{1+z^2}{w} & z \\
z & w
\end{array} \right),
\quad
\beta= \left( \begin{array}{cc}
\frac{1+w^2}{z} & -w \\
-w & z
\end{array} \right)
\mbox{ with }b=\frac{2(1+z^2+w^2)}{zw}.
\]
\end{lem}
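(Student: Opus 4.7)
My plan has three stages. First, a direct computation verifies the conjugation identity
\[\alpha^{-1}\beta^{-1}\alpha\beta = (\beta\alpha)^{-1}(\alpha\beta\alpha^{-1}\beta^{-1})(\beta\alpha),\]
so $[\alpha^{-1},\beta^{-1}]$ is conjugate to $[\alpha,\beta]$ and shares its Jordan type. Both then have trace $-2$, and assuming $b\neq 0$ so that the given commutator is not $-I$, neither is.

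In the second stage I simultaneously conjugate the pair $(\alpha,\beta)$ to put both commutators in the stated forms. The subgroup of $SL(2,\C)$ preserving the shape $\left(\begin{array}{rr}-1&*\\0&-1\end{array}\right)$ (for varying $*$) under conjugation is the Borel of upper triangular matrices. Via the conjugation identity above, the requirement that $[\alpha^{-1},\beta^{-1}]$ become lower triangular of the claimed form is equivalent to the second column of $\beta\alpha$ lying in $\langle e_1 \rangle$, a single linear condition on its entries. This is generically achievable by choice of an upper triangular conjugator; the degenerate case where $\beta\alpha$ is itself already upper triangular forces its eigenvalues to be $\pm i$ (by comparing characteristic polynomials of $\beta\alpha$ and $\alpha\beta = M_1\beta\alpha$) and would need to be handled separately.

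The third stage is the explicit parametrisation. Expanding $\alpha\beta = M_1\beta\alpha$ entry by entry, along with $\det\alpha=\det\beta=1$ and the normalisation $(\beta\alpha)_{22}=0$, forces also $(\alpha\beta)_{22}=0$ and the relation $\alpha_{12}\beta_{21}=\alpha_{21}\beta_{12}$. A further diagonal conjugation, which still preserves the form of $M_1$ up to rescaling the entry $-b$, can then be used to impose $\alpha_{12}=\alpha_{21}$; via the above relation this forces $\beta_{12}=\beta_{21}$, so both matrices become symmetric. Setting $z=\alpha_{12}$ and $w=\alpha_{22}$ as the remaining free parameters, the determinant and commutator equations determine $\alpha_{11}=(1+z^2)/w$, $\beta_{12}=-w$, $\beta_{22}=z$ and $\beta_{11}=(1+w^2)/z$. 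A direct computation of the $(1,2)$-entry of the resulting $[\alpha,\beta]$ finally produces $-b=-2(1+z^2+w^2)/(zw)$ as claimed.

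I expect the second stage, and in particular the careful handling of the degenerate case, to be the main obstacle; the third stage is orchestrated algebra once the normal forms are secured.
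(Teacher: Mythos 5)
The paper does not actually prove Lemma 6.1 in the text: it is imported verbatim from the appendix of the author's thesis \cite{bph}, so there is no in-paper argument to compare you against. Judged on its own terms, your proposal has the right skeleton but a genuine gap, and that gap is exactly where you flag it.

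Your stage one is correct: $\alpha^{-1}\beta^{-1}\alpha\beta = (\beta\alpha)^{-1}(\alpha\beta\alpha^{-1}\beta^{-1})(\beta\alpha)$, and writing $M_1 = [\alpha,\beta]$, the stabiliser in $SL(2,\C)$ of the shape $\left(\begin{smallmatrix}-1 & *\\0&-1\end{smallmatrix}\right)$ (for $*\neq 0$) is the upper triangular Borel $B$, since $M_1+I$ has line $\langle e_1\rangle$ as both kernel and image and a conjugate of it has the same shape only if $Pe_1\in\langle e_1\rangle$. A short computation also confirms that for $Q=\beta\alpha$ the condition that $Q^{-1}M_1Q$ be lower triangular is precisely $Q_{22}=0$, that $Q_{22}$ transforms under $P=\left(\begin{smallmatrix}p&q\\0&p^{-1}\end{smallmatrix}\right)$ as $Q_{22}\mapsto Q_{22}-qQ_{21}/p$, and that $Q_{21}$ itself merely rescales. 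So the normalisation of stage two is reachable if and only if $(\beta\alpha)_{21}\neq 0$, exactly as you say, and when $(\beta\alpha)_{21}=0$ the trace identity $\operatorname{tr}(\beta\alpha)=\operatorname{tr}(M_1\beta\alpha)=-\operatorname{tr}(\beta\alpha)$ forces $\beta\alpha$ to have eigenvalues $\pm i$. All of that is right.

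The problem is that the degenerate case you propose to ``handle separately'' is not empty, so the word ``generically'' does not close the proof. Take
\[
\alpha=\left(\begin{array}{cc}-i & i-1\\ i & 1\end{array}\right),\qquad
\beta=\left(\begin{array}{cc}i & i-1\\ 1 & 1\end{array}\right)\in SL(2,\C).
\]
Then $\beta\alpha=\left(\begin{smallmatrix}-i & -2\\ 0 & i\end{smallmatrix}\right)$ and $\alpha\beta=\left(\begin{smallmatrix}i & 2i\\ 0 & -i\end{smallmatrix}\right)$ are both already upper triangular, $[\alpha,\beta]=\left(\begin{smallmatrix}-1 & 2+2i\\ 0 & -1\end{smallmatrix}\right)$ satisfies the hypothesis with $b=-(2+2i)\neq 0$, and the pair is irreducible (a common eigenvector would be a $1$-eigenvector of $[\alpha,\beta]$, which has only $-1$ as eigenvalue). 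Since $(\beta\alpha)_{21}=0$ is an invariant of the $B$-orbit, no conjugation keeping $[\alpha,\beta]$ in the required upper triangular shape can make $(\beta\alpha)_{22}=0$, so the normalisation, and with it the parametrisation (which forces $(\beta\alpha)_{22}=0$), is simply unreachable. There is a parallel issue inside your stage three: the diagonal conjugation imposing $\alpha_{12}=\alpha_{21}$ needs $\alpha_{12}\alpha_{21}\neq 0$, and pairs such as $\alpha=\mathrm{diag}(i,-i)$, $\beta=\left(\begin{smallmatrix}1&1\\-1&0\end{smallmatrix}\right)$ fail this while still satisfying the hypothesis.

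What this really shows is that the lemma as literally stated needs a further (possibly tacit in the thesis) hypothesis. Notice that in both counterexamples some ``short'' element has finite order: $(\alpha\beta)^2=-I$ in the first, $\alpha^4=I$ in the second. In the only place the lemma is used in this paper (Case 2 of Theorem 6.2), $\alpha$ and $\beta$ are the images $\overline{X},\overline{Y}$ of free generators under an injective map to $GL(2,\C)$, so $\langle\alpha,\beta\rangle$ is free of rank two and no nontrivial word has finite order; under such an assumption the degenerate cases disappear and your argument can be pushed through. So: either add a hypothesis of that kind and show that it rules out $(\beta\alpha)_{21}=0$ and $\alpha_{12}\alpha_{21}=0$, or prove directly that these cases cannot occur in the situation of Section 6. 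As it stands the proof is not complete, and I would not describe the remaining work as routine, since you must first decide what the correct hypothesis is.
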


We also here note the obvious but useful fact that if we have a homomorphism
from a non abelian free group $F$ to $GL(n,\C)$ with soluble kernel then
this is injective because all subgroups of $F$ are free and $F$ has no
normal cyclic subgroups apart from the identity.

We first sort the possible Jordan blocks (generalised eigenspaces with a
suitable basis putting them into a canonical form) that can appear in a
4 by 4 matrix into 2 categories: those with small 
(read does not contain a non abelian free group)
centraliser and those with big (contains a non abelian free group) centraliser.

Those with small centraliser are
$
\sma{c}\lambda\fma,\sma{cr}\lambda&1\\0&\lambda\fma,
\sma{ccc}\lambda&1&0\\0&\lambda&0\\0&0&\lambda\fma,$
\[\sma{ccc}\lambda&1&0\\0&\lambda&1\\0&0&\lambda\fma, 
\sma{cccc}\lambda&1&0&0\\0&\lambda&1&0\\0&0&\lambda&0\\
0&0&0&\lambda\fma,
\sma{cccc}\lambda&1&0&0\\0&\lambda&1&0\\0&0&\lambda&1\\
0&0&0&\lambda\fma,
\]
and those with big centraliser are
\[\lambda I_2,\,\lambda I_3,\,\lambda I_4,\,
\sma{cccc}\lambda&1&0&0\\0&\lambda&0&0\\0&0&\lambda&0\\
0&0&0&\lambda\fma,
\sma{cccc}\lambda&1&0&0\\0&\lambda&0&0\\0&0&\lambda&1\\
0&0&0&\lambda\fma.
\]
Notice that of the 4 by 4 blocks we can distinguish between small and
big centraliser according to whether the matrix does not or does
satisfy the polynomial $(t-\lambda)^2$ respectively. For the latter
matrices we will need to know the exact description of each centraliser.
This is obvious for the first three but in the last two cases we will
change the canonical form to obtain a neater description (it has always
struck the author as slightly curious that given the two conjugate matrices
\[\sma{ccc}\lambda&1&0\\0&\lambda&0\\0&0&\lambda\fma
\mbox{ and }
\sma{ccc}\lambda&0&1\\0&\lambda&0\\0&0&\lambda\fma\]
the centraliser of the first does not consist solely of upper
triangular matrices whereas it does for the second).

We will conjugate the penultimate matrix into the form
\[\sma{cccc}\lambda&0&0&1\\0&\lambda&0&0\\0&0&\lambda&0\\
0&0&0&\lambda\fma,\mbox{ which has centraliser }
\sma{cccc}a&?&?&?\\0&?&?&?\\0&?&?&?\\0&0&0&a\fma\\
\]
where ? denotes any complex number, not necessarily the same number on
each appearance, whereas repeated letters are equal to each other.
Meanwhile the final matrix will instead be written
\[\sma{cccc}\lambda&0&1&0\\0&\lambda&0&1\\0&0&\lambda&0\\
0&0&0&\lambda\fma,\mbox{ which has centraliser }
\sma{cccc}a&b&?&?\\c&d&?&?\\0&0&a&b\\0&0&c&d\fma.\\
\]
We can now prove our main result.
\begin{thm} Let $G$ be the amalgamated free product
\[(F(X,Y)\times F(S))*_{H_1=H_2}(F(U,V)\times F(T))\]
where $F(X_1,\ldots , X_n)$ denotes the free group on elements $X_1,\ldots
,X_n$ and $H_1=\langle A=XYX^{-1}Y^{-1},S\rangle\cong\Z\times\Z\cong
H_2=\langle B=UVU^{-1}V^{-1},T\rangle$ 
with the identification of $H_1$ to $H_2$
given by $B=A^iS^j$ and $T=A^kS^l$ for $il-jk=1$ with $j,k$ odd integers
and $i,l$ non zero even integers. Then $G$ is the fundamental group
of a closed orientable graph 3-manifold but is not a subgroup of
$GL(4,\F)$ for $\F$ any field.
\end{thm}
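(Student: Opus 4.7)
The plan is to assume for contradiction that $G$ embeds in $GL(4,\F)$. By Corollary 5.2 we may take $\F$ to have characteristic zero, and since $G$ is finitely generated we may further take $\F = \C$. Theorem 4.1 then lets us conjugate so that $A$, $S$, $B$ and $T$ are simultaneously upper triangular with roots of unity on the diagonal.

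The first step is a centraliser analysis. Since $S$ commutes with the non-abelian free group $F(X,Y)$ inside $G_1$, its centraliser in $GL(4,\C)$ contains a non-abelian free group; decomposing $\C^4$ into generalised eigenspaces of $S$ splits this centraliser as a direct product over the eigenspaces, so at least one eigenspace component of $S$ must be of one of the ``big centraliser'' Jordan types listed above, namely a scalar block $\lambda I_k$ with $k\geq 2$ on some eigenspace, or (if a single eigenvalue fills all of $\C^4$) a rewritten $(2,1,1)$ or $(2,2)$ block. The analogous statement holds for $T$ via the non-abelian free group $F(U,V)$ in $G_2$.

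The proof then proceeds by case analysis on the pair of Jordan structures of $S$ and $T$. In each case I would write down the general form of a matrix in the centraliser of $S$ and of $T$ using the explicit centraliser descriptions recalled in the section, deduce the forms of $A$ and $B$, and then impose (i) the identifications $B=A^iS^j$ and $T=A^kS^l$, and (ii) the determinant one condition on each block of $A$ across an $S$-eigenspace and of $B$ across a $T$-eigenspace, which is now valid because $g=g'=1$ means $A$ and $B$ are single commutators. When every eigenspace component of both $S$ and $T$ is of scalar type $\lambda I_k$ the problem collapses to that of Section 3, so the genuinely new work is in the subcases where a $(2,1,1)$ or $(2,2)$ Jordan piece occurs for $S$ or for $T$. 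There Lemma 6.1 is the key tool: if a $2\times 2$ block of $A$ or $B$ is a commutator with repeated eigenvalue $-1$ we obtain the explicit parametrisation of the commutator's factors from that lemma, while if the repeated eigenvalue is $+1$ the observation just preceding Lemma 6.1 forces those factors to be simultaneously upper triangular and hence to generate a soluble group, at which point the remark that a homomorphism from a non-abelian free group to $GL(n,\C)$ with soluble kernel must be injective rules out the degenerate configuration. The parity hypotheses on $i,j,k,l$ are introduced precisely to force the repeated eigenvalue of the relevant $2\times 2$ block of $A$ or $B$ onto one specific side of the $\pm 1$ dichotomy, so that only one half of the Lemma~6.1 / soluble argument is ever needed and we avoid a proliferation of sub-subcases.

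I expect the main obstacle to be the joint case in which $S$ has Jordan type $(2,2)$ and $T$ is also of $(2,1,1)$ or $(2,2)$ type: here the centraliser descriptions are the richest and the $S$- and $T$-block structures are in the greatest tension, so writing $A$ and $B$ simultaneously in coordinates that respect both $S$-invariance and $T$-invariance while pushing through Lemma 6.1 is where the combinatorics becomes densest. Apart from this, the remaining difficulty is bookkeeping across the combined cases rather than any single fundamental obstruction.
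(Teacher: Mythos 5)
Your proposal follows the paper's own argument in all essential respects: reduce to $\C$ via Corollary 5.2, invoke Theorem 4.1 to make every diagonal entry of $A,S,B,T$ a root of unity, classify the generalised-eigenspace components of $S$ (and $T$) by whether they have big or small centraliser, and then run a case analysis that pins down the relevant $2\times 2$ block of $A$ via Lemma 6.1 and the solubility observation. Two small corrections are worth making: first, since the eigenvalues are roots of unity, none of $A,S,B,T$ can be diagonalisable (else finite order), so the all-scalar subcase you describe as collapsing to Section 3 is in fact vacuous, and you must instead include the mixed structure $S\cong\lambda I_2\oplus J_2(\mu)$ with two distinct eigenvalues --- this is the paper's Case 3, resolved by a small-centraliser argument or by swapping $S$ and $T$; second, the parity hypotheses on $i,j,k,l$ are not what force $\overline{A}$ onto the $-1$ side of the $\pm1$ dichotomy (that is done purely by the solubility/injectivity argument, regardless of parity); rather, $i$ even and $k$ odd are used to control the diagonals of $B=A^iS^j$ and $T=A^kS^l$ once the $-1,-1$ pattern in $A$ is established, ensuring for instance that $B$ has a constant diagonal and $T$ has the alternating $\lambda^l,-\lambda^l$ pattern that makes the centraliser of $T$ tractable and drives the final contradiction.
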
 
\begin{proof}
By the previous results we can assume that $\F=\C$ and that all eigenvalues
of $A,S,B,T$ are roots of unity. In particular none of these matrices is
diagonalisable else it would be of finite order. However we can conjugate
$G$ in $GL(4,\C)$ such that $S$ is in Jordan normal form (although using
our modified Jordan blocks rather than the standard ones where they differ).
Thus $S$ is a direct sum of Jordan blocks as given above, but as the
centraliser of $S$ contains a non abelian free group, we need to pick out
at least one block from the lower list. We start with the cases where
$S$ has only one eigenvalue.\\
Case 1:
\[S=
\sma{cccc}\lambda&0&0&1\\0&\lambda&0&0\\0&0&\lambda&0\\
0&0&0&\lambda\fma
\mbox{ and }F(X,Y)\mbox{ is of the form }
\sma{cccc}a&?&?&?\\0&?&?&?\\0&?&?&?\\0&0&0&a\fma\\
\]
which means that there is a homomorphism from $F(X,Y)\leq GL(4,\C)$
to $GL(2,\C)$ given by restriction to the middle 2 by 2 square of these
matrices. As any element in the kernel would be upper triangular, this
homomorphism is injective. Moreover if we denote the image of $w\in F(X,Y)$
by $\overline{w}$, we have that $\overline{A}$ is a commutator
$[\overline{X},\overline{Y}]$ of 2 by 2 matrices $\overline{X},\overline{Y}$
freely generating a free group. In particular $\mbox{det}(\overline{A})=1$
but we do not have 1 as a repeated eigenvalue by our comment on 2 by 2
matrices earlier. However the eigenvalues $\rho,\rho^{-1}$ of
$\overline{A}$ are also eigenvalues for $A$ so must be roots of unity by
Theorem 4.1. But $\overline{A}$ has infinite order so we cannot have
distinct eigenvalues, leaving only that $\overline{A}$ is conjugate to
$\sma{rr}-1&-b\\0&-1\fma$ for $b\neq 0$. Moreover the top and bottom
diagonal entries of $A$ are 1 because $A=[X,Y]$ for $X,Y$ in the centraliser
of $S$.

We now do some tidying up before proceeding to examine the forms of $B,T$
and the centraliser of $T$. First we conjugate by elements of the form
\[
\sma{cccc}1&\sigma&0&0\\0&1&0&0\\0&0&1&\tau\\0&0&0&1\fma\\
\mbox{ to ensure that }
A=\sma{cccc}1&0&?&?\\0&?&?&?\\0&?&?&0\\0&0&0&1\fma.\\
\]
We then conjugate using a 2 by 2 matrix in the middle block (and the
identity outside this), so that $A$ is of the same form but with its
middle block equal to $\overline{A}$. Note that we have stayed within 
the centraliser of $S$ and left it unchanged throughout,  thus
\[
B=\sma{cccc}\lambda^j&0&?&?\\0&\lambda^j&ib\lambda^j&?\\
0&0&\lambda^j&0\\0&0&0&\lambda^j\fma\mbox{ and }
T=\sma{cccc}\lambda^l&0&?&?\\0&-\lambda^l&-kb\lambda^l&?\\
0&0&-\lambda^l&0\\0&0&0&\lambda^l\fma\]
because $i$ is even and $k$ is odd. We now produce enough detail of the
centraliser of $T$ to complete the argument in this case. If we work with
2 by 2 blocks in order to save excessive variable names, we are looking to
see when
\[\sma{c|c} A&B\\ \hline C&D\\\fma
\mbox{ and }T=\sma{c|r}J&E\\ \hline 0&-J\fma
\mbox{ commute, where }
J=\sma{rr}\lambda^l&0\\0&-\lambda^l\fma\neq 0.\]
Now looking at the bottom left hand corner of the two equal products tells
us that $CJ=-JC$ so $C$ must have zeros on the diagonal. However from the
top left of the products we have $AJ-JA=EC$ and because the bottom left
entry of $E$ is non zero, this forces the top right hand entry of $C$ to
equal zero. Putting this back into $AJ-JA=EC$ means that the top right
hand entry of $A$ is also zero and the same argument provides the same
conclusion for the top right hand entry of $D$. Thus elements
commuting with $T$, and in particular $U$ and $V$, are all of the form
\[\sma{cccc}?&0&?&?\\?&?&?&?\\0&0&?&0\\?&0&?&?\fma.\\
\]
We note two points here: invariance of the second basis vector and that
there is a homomorphism from the centraliser of $T$ to the entries in the
four corners. Thus $B=[U,V]$ for $U,V$ of this form forces the entry in the
second column of $B$ to be 1, thus $\lambda^j=1$. But the kernel of our
homomorphism is soluble, because commutators of commutators
of elements in the kernel are
all upper triangular. Thus the homomorphism is injective when restricted
to $F(U,V)$ but the four corners of $B$ form the matrix 
$\sma{cc}1&?\\0&1\fma$, which cannot be a commutator of a pair of elements
in $GL(2,\C)$ generating a non abelian free group.\\
Case 2:
\[S=
\sma{cccc}\lambda&0&1&0\\0&\lambda&0&1\\0&0&\lambda&0\\
0&0&0&\lambda\fma
\mbox{ and }F(X,Y)\mbox{ is of the form }
\sma{cccc}a&b&?&?\\c&d&?&?\\0&0&a&b\\0&0&c&d\fma.\]
Thus we have an obvious homomorphism from $F(X,Y)$ to $GL(2,\C)$ by
restriction to the top left top 2 by 2 block (which is equal to the
bottom right block). Once again we denote the image of $w\in F(X,Y)$ by
$\overline{w}$ and note that $A=[X,Y]$ implies that
\[A=
\sma{c|c} [\overline{X},\overline{Y}]=\overline{A}
&?\\ \hline 0&[\overline{X},\overline{Y}]\fma
\]
with $\overline{A}$ having determinant 1. On conjugating $\overline{A}$
into Jordan normal form and building a block matrix in the centraliser
of $S$ with this 2 by 2 conjugating element 
repeated twice on the diagonal and zeros elsewhere, we can assume that
\[A=
\sma{rrrr}-1&-b&a_{11}&a_{12}\\0&-1&a_{21}&a_{22}\\0&0&-1&-b\\
0&0&0&-1\fma
\mbox{ and }T=
\sma{cccc}-\lambda^l&-kb\lambda^l&t_{11}&t_{12}\\0&-\lambda^l&t_{21}&t_{22}\\
0&0&-\lambda^l&-kb\lambda^l\\0&0&0&-\lambda^l\fma\] 
as $k$ is odd. Now $T$ would need to have a big centraliser so we are done
unless $(T+\lambda^lI)^2=0$, which would mean either $kb\lambda^l=0$
(which is false) or $t_{21}=0$ and $t_{11}+t_{22}=0$.
At this point we felt it reasonable to revert
to the computer. First suppose that $\overline{X}$ and 
$\overline{Y}$ are both in $SL(2,\C)$. Using a similar block conjugating
matrix as above, we can assume by Lemma 6.1 that
\[X=
\sma{cccc}\frac{1+z^2}{w}&z&x_{11}&x_{12}\\z&w&x_{21}&x_{22}\\
0&0&\frac{1+z^2}{w}&z\\0&0&z&w\fma
\mbox{ and }Y=
\sma{cccc}\frac{1+w^2}{z}&-w&y_{11}&y_{12}\\-w&z&y_{21}&y_{22}\\
0&0&\frac{1+w^2}{z}&-w\\0&0&-w&z\fma.\]
On feeding this into Mathematica and asking for the (simplified version of)
$A=XYX^{-1}Y^{-1}$, we found that
\begin{eqnarray*}
a_{11}+a_{22}=\frac{2(1+z^2+w^2)}{zw}
(&-&x_{12}w-x_{21}w+2y_{22}w+2x_{22}z+y_{12}z+y_{21}z)\\
\mbox{and\qquad}a_{21}=&-&x_{12}w-x_{21}w+2y_{22}w+2x_{22}z+y_{12}z+y_{21}z
\end{eqnarray*}
so $a_{21}=0$ implies that $a_{11}+a_{22}=0$. Now it is easily shown by
induction on $k$ and $l$ that $a_{21}\neq 0$ implies $t_{21}\neq 0$ in
which case we are fine. But again arguing by induction on $k$ and $l$, if
$a_{21}=a_{11}+a_{22}=0$ then the same equations hold for the equivalent
entries in all powers of $A$. Thus here $t_{11}+t_{22}=-2l\lambda^{l-1}$ which
is non zero as $l\neq 0$. Thus the centraliser of $T$ is never big enough
in this case.

We did assume above that $\overline{X},\overline{Y}\in SL(2,\C)$
but for block matrices of the form
$\sma{c|c} A&B\\ \hline 0&A\fma$
the determinant is $(\mbox{det}\,X)^2$.
Thus on being given $X,Y\in GL(4,\C)$
which both lie in the centraliser of $S$, 
we can multiply each by an appropriate scalar such that the repeated
diagonal 2 by 2 block in each matrix has determinant 1 and the
resulting commutator $XYX^{-1}Y^{-1}$ will be unchanged.\\
Case 3: The final case is where $S$ has more than one eigenvalue, hence we
are putting Jordan blocks together. But we require at least one block from
the lower list and at least one non diagonal block, which only allows for two
eigenvalues $\lambda\neq\mu$ with
\[S=
\sma{cccc}\lambda&0&0&0\\0&\lambda&0&0\\0&0&\mu&1\\
0&0&0&\mu\fma
\mbox{ and }F(X,Y)\mbox{ of the form }
\sma{cccc}?&?&0&0\\?&?&0&0\\0&0&a&?\\0&0&0&a\fma.\]
Again $\lambda,\mu$ are roots of unity and 
$A$ not of finite order and $\langle
X,Y\rangle$ not soluble implies that
we can conjugate to get
\[A=
\sma{cccc}-1&-b&0&0\\0&-1&0&0\\0&0&1&0\\
0&0&0&1\fma\mbox{ and }T=
\sma{cccc}-\lambda^l&?&0&0\\0&-\lambda^l&0&0\\0&0&\mu^l&?\\
0&0&0&\mu^l\fma.\]
But both ? are non zero so if $-\lambda^l\neq\mu^l$ then the
centraliser of $T$ is too small. If however they are equal then $T$ can
now be conjugated to have the same canonical form as $S$ did in Case 2.
Therefore we may as well swap $S$ and $T$, which can be achieved by
reversing the order of the two factors in the amalgamated product, thus
keeping $G$ to be the same group but replacing the gluing matrix by its
inverse. However the entries will still all be non zero and the same
conditions on their parities will continue to hold.
\end{proof}
\section{Further comments}

It seems strange that our proof has relied solely on the basic arguments
of Jordan normal form and centralisers. However the main advance here is
identifying a 3-manifold whose fundamental group is susceptible to this
approach. We briefly say how this came about, because the original aim
was to prove linearity of graph 3-manifolds.

The question of whether the fundamental group of every closed 3-manifold
is linear remains open. However this is known in nearly all cases:
linearity is preserved by subgroups, finite index supergroups and free
products so we quickly find ourselves only having to worry about
closed orientable irreducible 3-manifolds $M$
with infinite fundamental group.
We can then invoke Geometrisation which tells us that $M$ is either
a Siefert fibred space, is hyperbolic or admits a JSJ decomposition
where each piece has one of these two structures. Now $G=\pi_1M$ is
linear in the Siefert fibred case (see \cite{afw} Theorem 8.7
for a neat proof, credited to Boyer, that $G$ is linear over $\Z$)
and of course embeds in $\slt$ if $M$ is hyperbolic. In fact the recent
work of Wise, Agol and others also gives us linearity over $\Z$.
Shortly after this a paper \cite{ppw} of Przytycki and Wise obtained
linearity (again over $\Z$) for $G$ when the JSJ decomposition contains
at least one piece which is hyperbolic. Thus this only leaves the case
where all pieces are Siefert fibred, namely graph manifolds.

Consequently this paper started as an attempt to show linearity 
(at least over $\C$) of the fundamental group of a graph manifold $M$. A
useful start is that one can take a finite cover of $M$ where all the
pieces in the JSJ decomposition are $S^1$ bundles over a surface. As these
will all have boundary, we can reduce to the case where each piece is
a product and so we can assume that $G$ is a graph of groups
with vertex groups of the form $F_n\times\Z$ and $\Z\times\Z$ edge groups.
We looked at the case where the graph is a tree, so that $G$ is formed by
repeated amalgamations over these $\Z\times\Z$ subgroups (otherwise a loop
in the graph introduces HNN extensions which are generally harder to deal
with regarding questions of linearity and residual finiteness, so this
would have been considered only on successful completion of the former
case).

This case looked promising because of the following result of Shalen, which
is \cite{sha} Proposition 1.3.
\begin{prop}
Let $G_1*_HG_2$ be a free product amalgamating the subgroup $H_1\leq G_1$
with $H_2\leq G_2$ via the isomorphism $\phi:H_1\rightarrow H_2$.
Suppose that $G_1$ and $G_2$ are both subgroups of $SL(n,\C)$ such that\\
(1) the matrices $\phi(h)$ and $h$ are the same for all $h\in H_1$,\\
(2) every $h\in H_1$ is a diagonal matrix, and\\
(3) for every $g_1\in G_1-H_1$ the bottom left hand entry is non zero, as
is the top right hand entry for all $g_2\in G_2-H_2$. 
Then $G_1*_HG_2$ can also be embedded in $SL(n,\C)$.
\end{prop}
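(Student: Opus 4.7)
The approach is a ping-pong argument in complex projective space, after conjugating by diagonal matrices to make the hypotheses quantitative. Since $H$ consists of diagonal matrices and the identification $\phi$ is the identity, we have the freedom to replace the embedding $g_1 \mapsto g_1$ by $g_1 \mapsto C g_1 C^{-1}$ and $g_2 \mapsto g_2$ by $g_2 \mapsto D g_2 D^{-1}$ for any diagonal $C, D \in SL(n, \C)$ and still obtain a well-defined homomorphism from the abstract amalgam $G_1 *_H G_2$ into $SL(n, \C)$: because $C$ and $D$ commute with the elements of $H$, the two conjugated restrictions to $H$ continue to agree. It therefore suffices to show that any one such conjugated homomorphism is injective.

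With this freedom I would take $C = \mathrm{diag}(1, \ldots, 1, s)$ and $D = \mathrm{diag}(t, 1, \ldots, 1)$ with $|s|, |t|$ large. After conjugation the bottom-left entry of each $g_1 \in G_1 - H_1$ is multiplied by $s$ and the top-right entry of each $g_2 \in G_2 - H_2$ by $t$; geometrically, the conjugated $g_1$ pushes $[e_1] \in \mathbb{P}^{n-1}(\C)$ toward $[e_n]$, and the conjugated $g_2$ pushes $[e_n]$ toward $[e_1]$. Taking disjoint projective neighborhoods $U_1$ of $[e_n]$ and $U_2$ of $[e_1]$, the plan is to verify the ping-pong inclusions $g_1(U_2) \subseteq U_1$ for all $g_1 \in G_1 - H_1$ and $g_2(U_1) \subseteq U_2$ for all $g_2 \in G_2 - H_2$. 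The elements of $H$, being diagonal, fix both $[e_1]$ and $[e_n]$ and preserve $U_1$ and $U_2$, so the classical ping-pong lemma for amalgamated free products then yields the desired injection $G_1 *_H G_2 \hookrightarrow SL(n, \C)$.

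The hard part is the uniformity of these ping-pong inclusions across the whole of $G_1 - H_1$ and $G_2 - H_2$, not merely over a finite generating set. For fixed $s$, the image $Cg_1C^{-1}[e_1]$ lies at a distance from $[e_n]$ controlled by the ratio of $(g_1)_{n,1}$ to the other entries of the first column of $g_1$, and there is no a priori bound preventing this ratio from being arbitrarily small as $g_1$ varies over $G_1 - H_1$; a naive choice of $|s|$ only drives the distance to zero for each individual $g_1$. Resolving this delicacy is likely to require either a sharper dynamical argument (for example working in the full flag variety and tracking both an attracting line and a repelling hyperplane) or an induction on the length of a reduced normal-form word, exhibiting at each stage an explicit nonzero entry of the partial matrix product and checking it survives multiplication by the next factor from $G_i - H_i$.
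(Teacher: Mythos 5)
The proposition you are asked to prove is not proved in the paper at all: it is quoted verbatim from Shalen's 1979 article (\cite{sha}, Proposition~1.3), with the author merely remarking that ``the proof goes through exactly if we replace $SL(n,\C)$ with $GL(n,\C)$ throughout.'' So the relevant comparison is with Shalen's proof, not with anything internal to this paper.

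Your opening observation is exactly the right one and is the engine of Shalen's argument too: because $H$ consists of diagonal matrices, conjugating $G_2$ (say) by an arbitrary diagonal $D$ leaves the identification on $H$ intact, so one still gets a homomorphism from the abstract amalgam, and one is free to tune $D$. Where you and Shalen part ways is in how this freedom is exploited. You attempt a projective ping-pong with a fixed scalar, and you correctly identify the obstruction yourself: the inclusion $Cg_1C^{-1}(U_2)\subseteq U_1$ has to hold for \emph{every} $g_1\in G_1-H_1$, an infinite set, and nothing in hypothesis (3) prevents the bottom-left entries $(g_1)_{n,1}$ from accumulating at $0$ (for instance if $G_1-H_1$ contains a sequence converging in $SL(n,\C)$ to an element of $H_1$). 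For a fixed $s$ the attracting estimate then fails along that sequence, and no single choice of $s$ repairs it. This is a genuine gap, not a technicality, and as stated the ping-pong does not close.

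Shalen's actual argument is the second alternative you gesture at but do not carry out: it is algebraic rather than dynamical. One conjugates $G_2$ by a diagonal $D(t)$ depending on a free parameter $t$, and then proves by induction on the length of a normal-form word $w=g_1g_1'g_2g_2'\cdots$ (with $g_i\in G_1-H_1$, $g_j'\in G_2-H_2$) that a designated corner entry of $\rho_t(w)$ is a Laurent polynomial in $t$ whose extreme-degree coefficient is, up to sign, a product of the nonzero corner entries $(g_i)_{n,1}$ and $(g_j')_{1,n}$ together with nonzero diagonal entries of the $H$-factors. That Laurent polynomial is therefore not identically zero. Because the matrix entries of $G_1\cup G_2$ generate a countable subfield of $\C$, one can choose $t$ transcendental over that subfield, making every such Laurent polynomial evaluate to a nonzero number simultaneously. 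Hence $\rho_t(w)\neq I$ for every nontrivial reduced $w$, and $\rho_t$ is injective. This bypasses the uniformity problem entirely: instead of needing one metric estimate to hold for all of $G_1-H_1$ at once, one needs only a single transcendental $t$, and each reduced word is handled by its own finite computation. If you want to salvage the ping-pong picture, the word-length induction is essentially forced on you anyway, at which point the Laurent-polynomial bookkeeping is the cleaner way to write it down.
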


The proof goes through exactly if we replace $SL(n,\C)$ with $GL(n,\C)$
throughout.
This allows amalgamation over abelian subgroups if this subgroup is
precisely the subgroup of diagonal elements. Hence this explains our
initial assumption in Section 3 that the peripheral elements are
diagonalisable. It was possible to show non trivial solutions to the
equations relating the diagonal entries of the peripheral elements
(whereupon we would look for free groups with a specified word
given by the non central peripheral element  
having these specified eigenvalues) except in the closed case when the final
piece was added. This will correspond to a leaf of the graph, thus the
final piece will be a surface with one boundary
component, thus forcing the extra conditions of determinant 1 which
mean that now the number of variables and equations are equal and we
have no guarantee of a non trivial solution.

In fact Yi Liu in \cite{liu} showed that a nonpositively curved
graph manifold $M$
also has fundamental group which is linear over $\Z$. As
this curvature condition holds if $M$ has non empty boundary, it is
only linearity of fundamental groups of closed graph manifolds which is
in question, so we do not comment further on the above approach.
Note that by \cite{leeb} Example 4.1 the graph manifolds
considered in this paper admit a metric of nonpositive curvature if and
only if we have $\sma{cc} i&j\\k&l\fma=\sma{rr} \pm 1&0\\0&\pm 1\fma$ or 
$\sma{rr} 0&\pm 1\\\pm 1&0\fma$ (where we allow for all possible cases of
signs) so the 3-manifolds in Theorem 6.2 do not possess metrics of non
positive curvature. In particular linearity of the fundamental group
(over any field) is unknown for all the graph manifolds considered in this
paper where $i,j,k,l$ are non zero. As for trying to increase the dimension
above 4 in Theorem 6.2, we can automate the process in Section 3 to
avoid ploughing through endless cases. We wrote a basic MAGMA program
which on being given the two partitions of basis vectors for each
eigenspace, outputted the determinant of the equations. For $i,j,k,l$ all
non zero integers this was always non zero for dimension 5. From this
it seems likely that there is no embedding in $GL(n,\C)$ for moderately
small $n$ where the peripheral elements are diagonalisable or have an
eigenvalue which is not a root of unity. However given the variety of
arguments that were employed in Section 6, we are much less sure whether
these groups embed in $GL(5,\C)$ or $GL(6,\C)$ if the peripheral
elements are allowed to have 
more complicated Jordan normal forms.

If one wants variations on the question of whether all 3-manifold groups
are linear, one can restrict the ring all the way from $\C$ to $\Z$ and
one can try and restrict the dimension needed. If we consider all closed
3-manifolds $M$ admitting a metric of non positive curvature then although
$\pi_1(M)$ is now known to be linear over $\Z$, it is not known if there
is a universal $n\in\N$ such that $\pi_1(M)\subseteq GL(n,\Z)$ or even
$GL(n,\C)$.

We finish with a few words on fibred 3-manifolds. The powerful results
mentioned above showing linearity over $\Z$ originate in Wise's
results on virtually special groups. As these groups will then be
virtually RFRS, virtual fibering of all the manifolds mentioned above
as having a fundamental group linear over $\Z$ is now established
because of Agol's result in \cite{agjt} (with the exception of
Siefert fibre spaces whose Siefert fibration has non zero Euler number, 
thus are always closed, which will not be virtually fibred). 
Recently Agol was also able to
use this and Wise's work to establish that Thurston's famous question
on whether $M$ has a finite cover that fibres over the circle holds for
hyperbolic 3-manifolds. Thus every compact
orientable irreducible 3-manifold with non empty boundary a union of tori
is virtually fibred but this is not quite true
for closed orientable irreducible 3-manifolds. 
There are examples of graph manifolds failing to have a metric
of non positive curvature which are fibred, such as a mapping torus of
a Dehn twist on a closed surface, or virtually fibred but not fibred
as are our examples in Theorem 6.2
by \cite{neuw} Theorem D and the example after Theorem E of the same paper.
Along with those Siefert fibre spaces just mentioned,
there are even closed graph manifolds failing to have a metric of non
positive curvature which are not
virtually fibred as shown in \cite{luwu}. 

Therefore we can
restrict the linearity question a little by considering only fibred
3-manifolds, where the fundamental group is $\pi_1(S_g)\rtimes_\alpha\Z$
and every such group is the fundamental group of a fibred 3-manifold.
We can ask whether all groups of the form $\pi_1(S_g)\rtimes_\alpha\Z$ are\\
(1) Linear over $\C$?\\
(2) Linear over $\Z$?\\
(3) Embeddable in $GL(4,\C)$?\\
Now this is known to hold for most such groups because of Thurston's
famous work showing that a pseudo Anosov homeomorphism of $S_g$ gives
rise to a hyperbolic structure on the corresponding mapping torus.
We finish by pointing out that the current situation is very different if
we replace $\pi_1(S_g)$ by a free group $F_n$. It is completely open
whether all groups of the form $F_n\rtimes_\alpha\Z$ are linear, even
for a fixed $n\geq 3$.

\end{document}